\newtheorem{theorem}{Theorem}[section]
\newtheorem{corollary}[theorem]{Corollary}
\newtheorem{definition}[theorem]{Definition}
\newtheorem{lemma}[theorem]{Lemma}
\newcounter{hours}
\newcounter{minutes}
\newcommand{\printtime}{
    \setcounter{hours}{\time/60}%
    \setcounter{minutes}{\time-\value{hours}*60}
    \ifthenelse{\value{hours}<10}{0}{}\thehours:%
    \ifthenelse{\value{minutes}<10}{0}{}\theminutes}
\begin{document}

\title{Excluded checkerboard colourable ribbon graph minors}
\author{Xia Guo}
\address{School of Mathematical Sciences, Xiamen University, 361005, Xiamen, China}
\email{guoxia@stu.xmu.edu.cn}
\author{Xian'an Jin}
\address{School of Mathematical Sciences, Xiamen University, 361005, Xiamen, China}
\email{xajin@126.com}
\author{Qi Yan}
\address{School of Mathematical Sciences, Xiamen University, 361005, Xiamen, China}
\email{qiyanmath@163.com }
\thanks{This work is supported by NSFC (No. 11671336) and the Fundamental Research
Funds for the Central Universities (No. 20720190062). }

\noindent
\begin{abstract}
In this paper, we first introduce the notions of checkerboard colourable minors for ribbon graphs motivated by the Eulerian ribbon graph minors, and two kinds of bipartite minors for ribbon graphs, one of which is the dual of the checkerboard colourable minors and the other is motivated by the bipartite minors of abstract graphs. Then we give an excluded minor characterization of the class of checkerboard colourable ribbon graphs, bipartite ribbon graphs, plane checkerboard colourable ribbon graphs and plane bipartite ribbon graphs.
\end{abstract}

\keywords{ribbon graph; minor; checkerboard colourable; bipartite; plane graph.}

\subjclass[2000] {05C10; 05C45; 57M15}
\maketitle

\section{Introduction}
\noindent

The geometric dual is a fundamental concept in graph theory. It can be stated in the language of ribbon graphs as follows.
For a ribbon graph $G$, its geometrical dual $G^{\ast}$ is obtained by sewing discs, which will be the vertices of $G^{\ast}$, into the boundary components of $G$, and removing the interiors, which will be the faces of $G^{\ast}$, of all vertex discs of $G$.

To unify several Thistlethwaite's theorems, Chmutov \cite{C09} introduced the concept of partial duality which is a far-reaching extension of geometric duality.
Roughly speaking, the partial dual of a ribbon graph is obtained by forming the geometric dual with respect to an edge subset of this ribbon graph. The set of partial duals of a ribbon graph actually corresponds to the set of states of a link diagram. It includes the classical checkerboard graph and the Seifert graph as special cases and provides a bridge connecting knot theory and graph theory. In history the checkerboard graph was once used to solve the famous Tait conjecture \cite{M87} in knot theory. The partial duality has developed into a topic of independent interest (see \cite{EM13}) which has numerous applications in graph theory, knot theory, matroid theory and so on.

For an abstract graph, contracting a loop is the same as the deletion of this loop. But the ribbon graph has one more vertex after contracting an orientable loop.
Hence there is a fundamental difference between the theory of ribbon graph minors, defined by Moffatt \cite{M16}, and graph minors. Robertson and Seymour \cite{RS04} proved one can characterize every minor-closed family of graphs by a finite set of excluded minors (i.e. the Robertson-Seymour Theorem). For example, a graph is planar if and only if it contains no minors equivalent to $K_5$ or $K_{3,3}$.
In \cite{M16}, Moffatt gave a similar conjecture on the ribbon graph minors and an excluded graph minor characterization for the family of ribbon graphs representing knot and link diagrams.
He \cite{Mo16} further gave a characterization of ribbon graphs that the Euler genus of its partial duals is at most one.

It is well known that a 4-regular ribbon graph is the medial graph of some ribbon graph if and only if the 4-regular ribbon graph is checkerboard colorable. Clearly, a ribbon graph is checkerboard colourable if and only if its geometrical dual is bipartite. The set of Eulerian ribbon graphs properly include the set of checkerboard colourable ribbon graphs. A ribbon graph is Eulerian if and only if its geometrical dual is even-face. In \cite{MJ20}, Metsidik and Jin introduced the notions of Eulerian minor and even-face minor for ribbon graphs and characterized the Eulerian and even-face ribbon graphs by excluding these minors. In the case of abstract graphs, bipartite minors were introduced by Chudnovsky et. al. \cite{CK16} to characterize outerplanar graphs and forests. These works motivate us to study checkerboard colourable ribbon graph minors and bipartite ribbon graph minors.

The paper is organized as follows. We first give some preliminaries on ribbon graphs in Section 2. In Section $3$ we introduce the checkerboard colourable ribbon graph minor and its dual, the bipartite ribbon graph minor. We also give a different bipartite minor of ribbon graphs coming from bipartite minors of abstract graphs, we call it \emph{bipartite ribbon graph join minor}.
In Section $4$, we characterize checkerboard colourable ribbon graphs by checkerboard colourable ribbon graph minors and Eulerian ribbon graph minors, respectively. By the duality, we also obtain a characterization of bipartite ribbon graphs via bipartite ribbon graph minors and even-face ribbon graph minors. We further study the characterization of
bipartite ribbon graphs via bipartite ribbon graph join minors.
In the final Section $5$, we describe an excluded minors characterization of plane checkerboard colorable and plane bipartite ribbon graphs.

\section{Preliminaries on ribbon graphs}
\noindent

We begin with a brief review of ribbon graphs.

\begin{definition}\cite{BR02}
A ribbon graph $G$ is a (possibly non-orientable) surface with boundary, represented as the union of two sets of topological discs: a set $V(G)$ of vertices,  and a set $E(G)$ of edges such that
\begin{enumerate}
  \item the vertices and edges intersect in disjoint line segments;
  \item each such line segment lies on the boundary of precisely one vertex and precisely one edge;
  \item every edge contains exactly two such line segments.
\end{enumerate}

\end{definition}

Let $G=(V(G), E(G))$ be a ribbon graph. As in \cite{Me11}, we call the line segments intersected by vertices and edges \textit{common line segments}. By deleting the common line segments from the boundary of a vertex, we call the remaining line segments \textit{vertex line segments} of this vertex.  Similarly, if we delete the two common line segments from the boundary of an edge, we call the remaining two disjoint line segments \textit{edge line segments} of this edge,  shown in Figure \ref{f_1}, and they are coloured with red, black and blue respectively. Note that each edge of a ribbon graph consists of two half-edges.

\begin{figure}[htbp]
\centering
\includegraphics[width=1in]{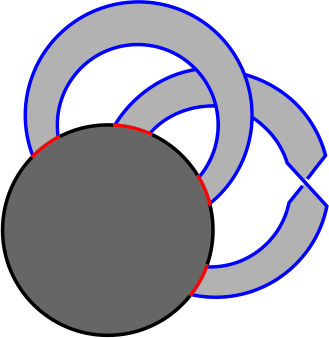}
\caption{A ribbon graph with red common line segments, black vertex line segments and blue edge line segments.}
\label{f_1}
\end{figure}

\begin{definition}\cite{C09}
An arrow presentation consists of a set of circles (corresponding to vertices) with pairs of labelled arrows (corresponding to edges), called  marking arrows, on them such that there are exactly two marking arrows of each label.
\end{definition}

Arrow presentations are equivalent to ribbon graphs, but they have advantage over ribbon graphs that they give a particularly efficient way to represent ribbon graphs. We emphasize that the circles in an arrow presentation are not equipped with any embedding in the plane or $\mathbb{R}^3$.

Let $G$ be an arrow presentation and $A\subseteq E(G)$.  The \textit{partial dual} $G^{A}$ of $G$ with respect to $A$ is constructed as follows. For each $e\in A$, suppose $e',~ e''$ are the two arrows labelled $e$ in the arrow presentation of $G$. Draw a line segment with an arrow on it directed from the head of $e'$ to the tail of $e''$ and from the head of $e''$ to the tail of $e'$, respectively. Label both of these arrows $e$, then delete $e',~e''$ and the arcs containing them, as shown in Figure \ref{f_2}. Note that $G^{E(G)}=G^{\ast}$. Clearly there is a natural 1-1 correspondence between the edges of $G$ and the edges of $G^{\ast}$. In particular we denote the corresponding edge of $e\in E(G)$ by $e^{\ast}$ in $G^{\ast}$.

\begin{figure}[htbp]
\centering
\includegraphics[width=3.0in]{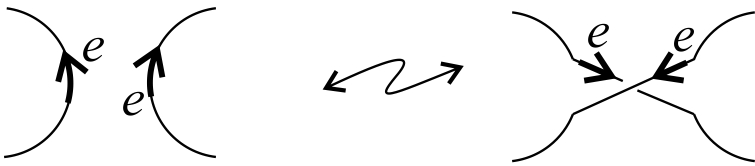}
\caption{Taking the partial dual respect to $e$ in an arrow presentation.}
\label{f_2}
\end{figure}

A \textit{deletion} $G-e$ of an edge from a ribbon graph $G$ is obtained from $G$ by removal of the edge ribbon $e$. It is equivalent to the deletion of the pair of marking arrows labeled by $e$ for an arrow presentation.
 Throughout this paper, we omit the set brackets in the case of a single element set.
 A \textit{contraction} $G/e$ is defined by the equation $G/e:= G^e-e$.

A ribbon graph is \textit{checkerboard colourable} if there exists an assignment with two colours to its boundary components such that the two edge line segments of each edges possess different colours.
A ribbon graph is said to be \textit{even-face} if each of its boundary components contains even number of edge line segments.
A ribbon graph is \textit{Eulerian} if the degree of each of its vertices is even. A ribbon graph is \textit{bipartite} if it, as an abstract graph, does not contain cycles of odd lengths.
A \textit{bouquet} denotes the ribbon graph with only one vertex. The \textit{Euler genus} $\gamma(G)$ of a ribbon graph $G$ is defined by $\gamma(G)=2c(G)-|V(G)|+|E(G)|-|F(G)|$, where $c(G)$, $|V(G)|$, $|E(G)|$ and $|F(G)|$ are the number of connected components, vertices, edges and faces (equivalently, boundary components) of $G$, respectively. In particular, $G$ is a plane graph if and only if $\gamma(G)=0$.

\section{Checkerboard colourable and bipartite minors of ribbon graphs}
\noindent

We first recall Eulerian ribbon graph minor and the even-face ribbon graph minor introduced in \cite{MJ20}.

The \textit{distance} between two vertex line segments (or edge line segments) lying on a boundary component is the minimum number of edge line segments lying between them on the boundary component.
The \textit{dual distance} between two vertex line segments (or common line segments) lying on a boundary of a vertex is the minimum number of common line segments lying between them on the boundary of the vertex.

If $e$ is not an orientable loop with the dual distance of common line segments of $e$ is odd, we call $G/e$ the \textit{proper edge contraction}.
The deletion $G-e$ is \textit{proper} if $G^{\ast}/e^{\ast}$ is proper. Suppose $C$ is a vertex boundary or a boundary component of a ribbon graph. If two arrows lie on $C$ and the directions of these two arrows are consistent with a direction of traveling around $C$, these two arrows are said to be consistent on $C$. Note that a common line segment of a ribbon graph $G$ corresponds to an edge line segment of $G^{\ast}$ and a vertex line segment of $G$ is still a vertex line segment of $G^{\ast}$.
Thus a vertex boundary of a ribbon graph $G$ corresponds to a boundary component of $G^{\ast}$. Equivalently, if $e$ is not the edge satisfying the following three conditions, then the deletion $G-e$ is proper.
\begin{enumerate}
\item the two edge line segments of $e$ lie on a same boundary component, denoted by $C_1$;
\item the distance of two edge line segments of $e$ is odd;
\item if we assign two arrows to the two edge line segments of $e$ such that these two arrows are consistent on the edge boundary of $e$, then these two arrows are consistent on $C_1$.
\end{enumerate}

The operation, \textit{evenly splitting a vertex}, is defined as follows. See Figure \ref{f 45}.
\begin{enumerate}
\item For a vertex, we pick two vertex line segments so that the dual distance of them is even;
\item Then we put two marking arrows of $e$ to these two vertex line segments such that they are consistent on this vertex boundary;
\item At last we contract the edge $e$.
\end{enumerate}
\begin{figure}[htbp]
\centering\includegraphics[width=4.0in]{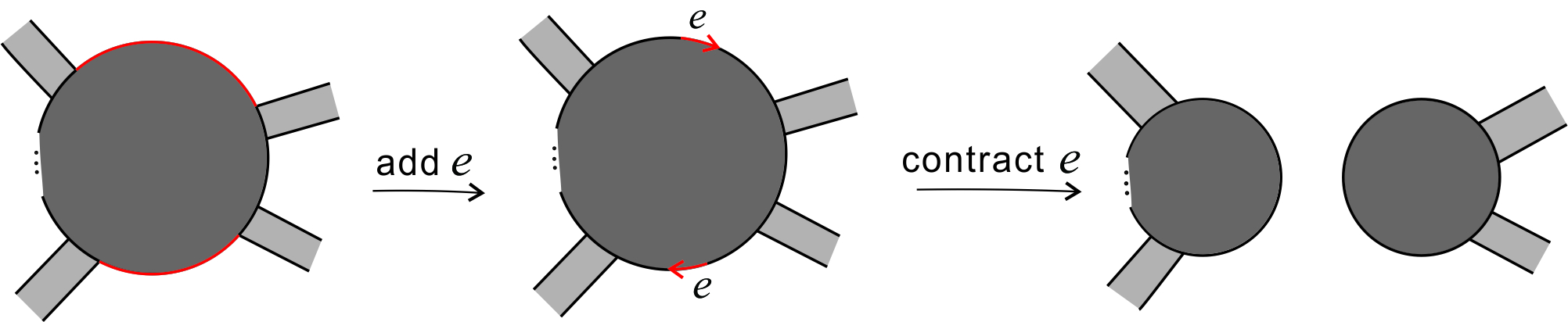}
\caption{Evenly splitting a vertex.}\label{f 45}
\end{figure}

Dually, the operation, \textit{evenly splitting a face}, is defined as follows. See Figure \ref{f 46}.

\begin{enumerate}
\item Choose two vertex line segments such that they lie on a boundary component and the distance of them is even;
\item Place two marking arrows of $e$ to this two vertex line segments such that they are consistent on this boundary component;
\item Contract the edge $e$ finally.
\end{enumerate}
\begin{figure}[htbp]
\centering\includegraphics[width=5.0in]{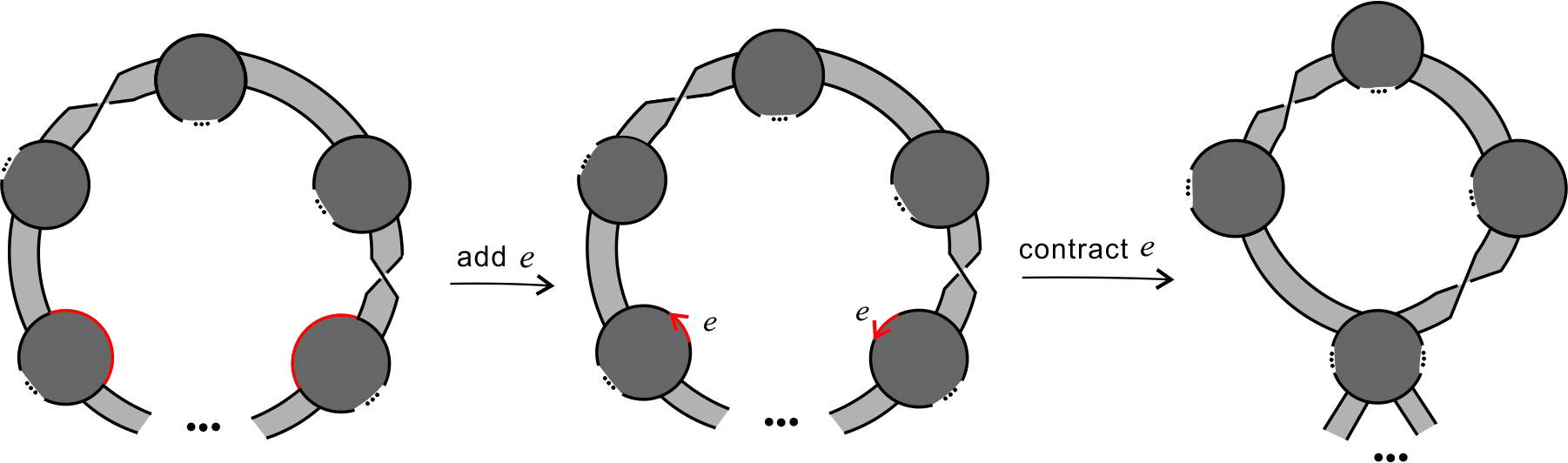}
\caption{Evenly splitting a face.}\label{f 46}
\end{figure}

Evenly splitting a face $f$ in an even-face ribbon graph $G$ corresponds to evenly splitting a vertex $f^{\ast}$ in the Eulerian ribbon graph $G^{\ast}$.

\begin{definition}\cite{MJ20}
\begin{enumerate}
\item A ribbon graph $H$ is an {\it Eulerian minor} of a ribbon graph $G$ if $H$ can be obtained from $G$  by a sequence of proper edge contractions, component deletions, or evenly splitting vertices.
\item A ribbon graph $H$ is an {\it even-face minor} of a ribbon graph $G$ if $H$ can be obtained from $G$ by a sequence of proper edge deletions, component deletions, or evenly splitting faces.
\end{enumerate}
\end{definition}

According to definitions, the following lemma is clear.

\begin{lemma}\cite{MJ20} \label{lemma2}
A ribbon graph $H$ is an Eulerian minor of a ribbon graph $G$ if and only if $H^{\ast}$ is an even-face minor of $G^{\ast}$.
\end{lemma}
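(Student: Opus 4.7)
The plan is to decompose an Eulerian minor sequence into its three kinds of operations and show that under geometric duality they transform one-to-one into the three kinds of operations defining an even-face minor. Concretely, I would establish three correspondences: a proper edge contraction $G/e$ corresponds to a proper edge deletion $G^{\ast}-e^{\ast}$; a component deletion in $G$ corresponds to a component deletion in $G^{\ast}$; and evenly splitting a vertex of $G$ corresponds to evenly splitting a face of $G^{\ast}$. Once these three facts are in hand, induction on the length of the minor sequence yields the forward implication, and the reverse follows from the involution $G^{\ast\ast}=G$.

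For the first correspondence, I would use the standard partial-duality identity $(G/e)^{\ast} = G^{\ast}-e^{\ast}$, which follows from the definition $G/e := G^{e}-e$ together with $(G-e)^{\ast} = G^{\ast}/e^{\ast}$ and $(G^{A})^{\ast} = G^{E(G)\setminus A}$. The agreement of the ``proper'' qualifier on the two sides is essentially built into the definitions in the excerpt: the deletion $G-e$ is declared proper exactly when $G^{\ast}/e^{\ast}$ is proper, so applying this with $G^{\ast}$ in place of $G$ yields that $G^{\ast}-e^{\ast}$ is a proper deletion if and only if $G^{\ast\ast}/e^{\ast\ast}=G/e$ is a proper contraction. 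For the second correspondence, the geometric dual of a disjoint union is the disjoint union of geometric duals, so the connected components of $G$ are in natural bijection with those of $G^{\ast}$, and the two component deletions commute with $\ast$.

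For the third correspondence, the excerpt already states it explicitly; to verify it I would use that partial duality exchanges vertex boundaries of $G$ with boundary components of $G^{\ast}$ and swaps common line segments with edge line segments while keeping vertex line segments fixed. Under this identification, the dual distance between two vertex line segments on a vertex boundary of $G$ equals the ordinary distance between the same two segments on the corresponding boundary component of $G^{\ast}$, and the consistency-of-arrows condition transfers verbatim, so an ``even'' splitting at a vertex becomes an ``even'' splitting at the dual face. The main, modest, obstacle is the careful bookkeeping needed to verify this third correspondence, namely tracking how line segments and marking arrows are permuted under partial duality; once the three pieces are assembled, the lemma follows by applying the correspondences step by step along a minor sequence and invoking $G^{\ast\ast}=G$ for the converse.
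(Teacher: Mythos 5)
Your proposal is correct and follows exactly the route the paper has in mind: the paper treats this lemma as immediate from the definitions (citing the duality identities $(G/e)^{\ast}=G^{\ast}-e^{\ast}$, the definition of proper deletion via proper contraction in the dual, and the stated correspondence between evenly splitting a vertex and evenly splitting a face), and your three correspondences plus induction along the minor sequence simply spell out that same argument.
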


Now we define the checkerboard colourable minor and bipartite minor for ribbon graphs by simply removing the ``proper" in the above definition. This is because contracting an edge preserves the checkerboard colorability and deleting an edge preserves the bipartite property.

\begin{definition}
A ribbon graph $H$ is a {\it checkerboard colourable minor} of a ribbon graph $G$ if $H$ can be obtained from $G$ by a sequence of edge contractions, component deletions, or evenly splitting vertices.
\end{definition}

Dually, we have

\begin{definition}
A ribbon graph $H$ is a {\it bipartite minor} of a ribbon graph $G$ if $H$ can be obtained from $G$ by a sequence of edge deletions, component deletions, or evenly splitting faces.
\end{definition}

The following lemma is clear.

\begin{lemma}\label{lemma4}
A ribbon graph $H$ is a checkerboard colourable minor of a ribbon graph $G$ if and only if $H^{\ast}$ is a bipartite ribbon graph minor of $G^{\ast}$.
\end{lemma}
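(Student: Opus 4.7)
The plan is to establish the equivalence operation-by-operation, since the three operations defining a checkerboard colourable minor (edge contraction, component deletion, evenly splitting a vertex) should correspond under geometric duality to the three operations defining a bipartite minor (edge deletion, component deletion, evenly splitting a face). Combined with the involution $(G^{\ast})^{\ast}=G$, these correspondences will immediately yield both directions of the ``if and only if''.

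First I would recall the standard partial-dual identities $(G-e)^{\ast}=G^{\ast}/e^{\ast}$ and $(G/e)^{\ast}=G^{\ast}-e^{\ast}$; these already interchange the edge-contraction and edge-deletion operations appearing in the two definitions. Component deletion is self-dual because duality preserves the partition of $G$ into connected components. The substantive step is to verify that evenly splitting a vertex of $G$ corresponds, under $\ast$, to evenly splitting a face of $G^{\ast}$. Recall that in passing from $G$ to $G^{\ast}$, each vertex boundary becomes a boundary component, each common line segment becomes an edge line segment, and vertex line segments are preserved. Thus the ``dual distance on a vertex boundary'' used in the vertex-splitting procedure in $G$ becomes precisely the ``distance on a boundary component'' used in the face-splitting procedure in $G^{\ast}$, with the even parity preserved. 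The arrow-consistency condition also carries over, since a direction of travel around a vertex boundary of $G$ agrees with the induced direction around the corresponding boundary component of $G^{\ast}$. Finally, the contraction step concluding the vertex-splitting procedure becomes a deletion step in $G^{\ast}$ by the edge-contraction/deletion duality.

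Putting these ingredients together: if $H$ is obtained from $G$ by a sequence $\sigma_1,\sigma_2,\ldots,\sigma_k$ of edge contractions, component deletions and vertex splittings, then applying the correspondences above term by term produces a sequence $\sigma_1^{\ast},\ldots,\sigma_k^{\ast}$ of edge deletions, component deletions and face splittings that transforms $G^{\ast}$ into $H^{\ast}$. The reverse implication is obtained by swapping the roles of $G$ and $G^{\ast}$ and using $(G^{\ast})^{\ast}=G$, $(H^{\ast})^{\ast}=H$.

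The only real subtlety will be the careful verification that the ``even'' parity condition and the arrow-consistency condition in evenly splitting a vertex translate exactly into their face-splitting counterparts under partial duality; however, this is precisely the correspondence already invoked in the Eulerian/even-face version (the observation immediately preceding Lemma \ref{lemma2} in the excerpt), and the argument goes through verbatim here because dropping ``proper'' from the contraction/deletion operations does not affect the splitting correspondence. Consequently the proof is essentially bookkeeping, with no new geometric content beyond what already underlies Lemma \ref{lemma2}.
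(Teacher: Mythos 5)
Your proposal is correct and follows essentially the paper's (implicit) argument: the paper states Lemma \ref{lemma4} without proof precisely because of the operation-by-operation duality you spell out --- $(G/e)^{\ast}=G^{\ast}-e^{\ast}$, self-duality of component deletion, and the vertex-splitting/face-splitting correspondence already recorded just before Lemma \ref{lemma2}. One minor slip worth noting: in the paper's definition, evenly splitting a face \emph{also} ends with a contraction of the auxiliary edge, so the final contraction in vertex splitting does not literally ``become a deletion step'' in $G^{\ast}$; rather the two composite splitting operations (add the marked edge, then contract) correspond to each other as wholes under geometric duality, which is what your argument in fact uses.
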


Relations among Eulerian minors, even-face minors, checkerboard colourable minors and bipartite minors for ribbon graphs are shown in Figure \ref{f_6}.

\begin{figure}[htbp]
\centering
\includegraphics[width=3.5in]{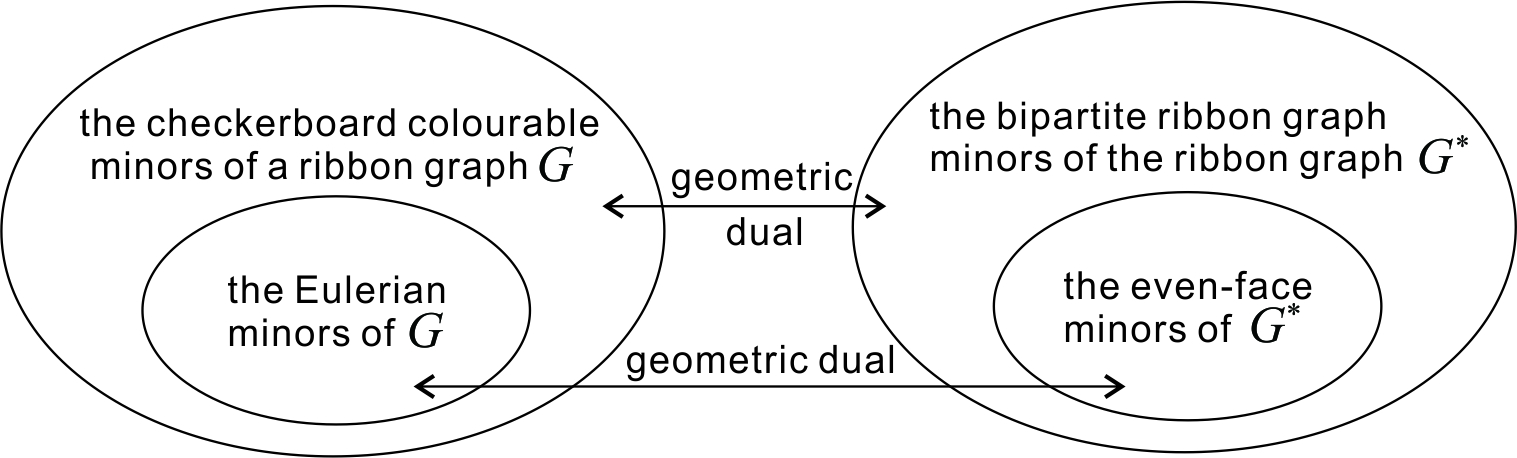}
\caption{Relations of the ribbon graph minors.}
\label{f_6}
\end{figure}

Let $G$ be a ribbon graph and $u$, $v$ be two vertices of $G$. The \textit{join} of $u$ and $v$ is merging vertex discs $u$ and $v$ together by identifying an arc lying on the vertex line segment of $u$ with an arc lying on the vertex line segment of $v$. Note that the way of joining two vertices is actually not unique, but it will have no any effect on the characterization of bipartite ribbon graphs later.
The join of two vertices is called \textit{permissible} if these two vertices have a common neighbor.

\begin{definition}
A ribbon graph $H$ is a {\it bipartite ribbon graph join minor} of a ribbon graph $G$ if $H$ can be obtained from $G$ by a sequence of permissible vertex joins, vertex deletions or edge deletions.
\end{definition}

Note that since each permissible join identifies two vertices that have a common neighbor, these two vertices must belong to the same part when the ribbon graph is bipartite. Hence, the set of bipartite ribbon graphs is bipartite ribbon graph join minor closed.

The notion of bipartite ribbon graph minor is different from bipartite ribbon graph join minor.
As shown in Figure \ref{f_5}, $H_1$ is a bipartite ribbon graph minor of the bouquet $B_3-e$ (see Figure \ref{f 40}), but $B_3-e$ contains no bipartite ribbon graph join minor equivalent to  $H_1$.
On the other hand, the bipartite ribbon graph join minor $H_2$ (having a unique boundary component) of the ribbon graph $G_2$ cannot be a bipartite ribbon graph minor of $G_2$.

\begin{figure}[htbp]
\centering
\includegraphics[width=5.5in]{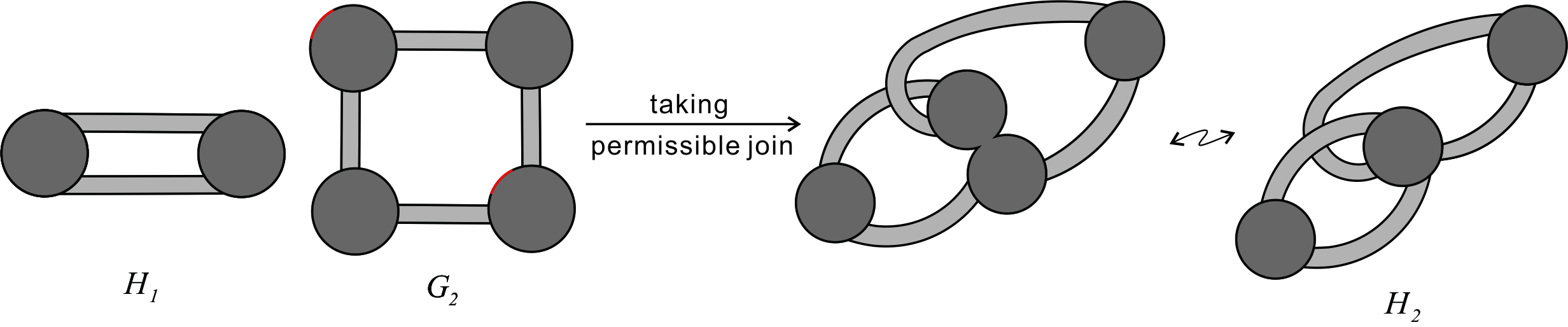}
\caption{Bipartite ribbon graph minor and bipartite ribbon graph join minor are different.}
\label{f_5}
\end{figure}

\section{Excluded minors for checkerboard colourable and bipartite ribbon graphs}

\begin{lemma}\label{lemma7}
The set of checkerboard colourable ribbon graphs is checkerboard colourable minor closed and Eulerian minor closed.
\end{lemma}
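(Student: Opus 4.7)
The plan is to reduce both closure claims to one. Every operation defining an Eulerian minor (proper edge contraction, component deletion, or evenly splitting a vertex) is a special case of an operation defining a checkerboard colourable minor, since a proper edge contraction is in particular an edge contraction and the remaining two operations are identical in the two definitions. It therefore suffices to prove that the class of checkerboard colourable ribbon graphs is stable under arbitrary edge contractions, component deletions, and evenly splitting vertices.

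Component deletion is immediate: the restriction of a valid two-colouring of the boundary components of $G$ to those lying on a union of connected components remains a valid two-colouring. For edge contraction I would pass to the dual using the standard partial-duality identity $(G/e)^{\ast}=G^{\ast}-e^{\ast}$: a ribbon graph is checkerboard colourable iff its geometric dual is bipartite, and edge deletion manifestly preserves bipartiteness, so $G/e$ is checkerboard colourable whenever $G$ is.

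The main step is closure under evenly splitting a vertex. I would pass to the dual using the observation recorded in the preliminaries that evenly splitting a vertex in $G$ corresponds to evenly splitting a face in $G^{\ast}$, reducing the problem to showing that evenly splitting a face of a bipartite ribbon graph is bipartite. Suppose $G^{\ast}$ has bipartition $(A,B)$ and that the chosen vertex line segments on a boundary component $C$ lie on vertices $u$ and $v$ at even distance $d$. Since a boundary component alternates cyclically between vertex line segments and edge line segments (separated by common line segments), following $C$ from one chosen segment to the other traces a walk $u=w_{0},w_{1},\ldots,w_{d}=v$ of length $d$ in the abstract graph of $G^{\ast}$. Because $d$ is even and $G^{\ast}$ is bipartite, $u$ and $v$ lie in the same part of $(A,B)$, so the net effect of the operation---adding an edge between $u$ and $v$ and then contracting it---identifies two vertices of the same colour class, and bipartiteness is preserved.

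The step I expect to be the main obstacle is the degenerate loop case $u=v$, in which the added edge $e$ is a loop at $v$ and its contraction $G^{\ast}/e=(G^{\ast})^{e}-e$ must be analysed separately: the orientable sub-case splits $v$ into two vertices, the non-orientable sub-case leaves a single vertex but alters the ribbon structure. A direct check using the arrow presentation shows that every vertex produced by the contraction is adjacent only to former neighbours of $v$, so assigning it the colour of $v$ extends $(A,B)$ to a valid bipartition of the resulting abstract graph, completing the proof.
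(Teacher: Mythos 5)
Your argument is correct, but it reaches the conclusion by a genuinely different route than the paper. The paper's proof is direct and pictorial: it shows that a checkerboard colouring of $G$ induces one on $G/e$ and on the graph obtained by evenly splitting a vertex (its Figures 7 and 8), with component deletion being trivial, and the Eulerian-minor closure then follows exactly as in your first reduction (proper contractions are contractions). You instead dualize everything: using the facts recorded in the paper's preliminaries that $G$ is checkerboard colourable iff $G^{\ast}$ is bipartite, that $(G/e)^{\ast}=G^{\ast}-e^{\ast}$, and that evenly splitting a vertex of $G$ corresponds to evenly splitting a face of $G^{\ast}$, you reduce the lemma to the purely abstract-graph statement that bipartiteness is preserved by edge deletion and by evenly splitting a face, which you establish with the parity argument: an even distance along a boundary component yields an even walk between the two host vertices, so the added edge joins (and the contraction identifies) two vertices in the same colour class, with the loop case $u=v$ handled separately via the contraction formula. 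What your approach buys is that one never has to manipulate two-colourings of boundary components at all --- everything becomes a statement about odd cycles in the abstract dual --- at the cost of invoking the duality dictionary; interestingly, the paper goes the other way, deriving the bipartite closure (its Lemma \ref{lemma8}) from this lemma by duality, whereas you in effect prove the bipartite side first, which is not circular since you prove it from scratch. Two cosmetic points: common line segments are interior to the surface, so they do not lie on (or separate segments of) a boundary component, and the boundary need not strictly alternate vertex and edge line segments when two common line segments abut; neither affects your walk argument, which only needs that consecutive edge line segments along the boundary correspond to consecutive edges of a walk in the abstract graph of $G^{\ast}$.
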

\begin{proof}
Let $G$ be a checkerboard colourable ribbon graph. Then every connected component of $G$ is checkerboard colourable, we obtain a checkerboard colourable ribbon graph when we delete a connected component.
The local presentation of ribbon graphs $G$ and $G/e$ is shown in Figure \ref{f_7}. Clearly, $G/e$ is also checkerboard colourable.
As shown in Figure \ref{f_8}, the ribbon graph obtained from $G$ by taking evenly splitting a vertex is also checkerboard colourable.
\begin{figure}[htbp]
\centering
\includegraphics[width=3.5in]{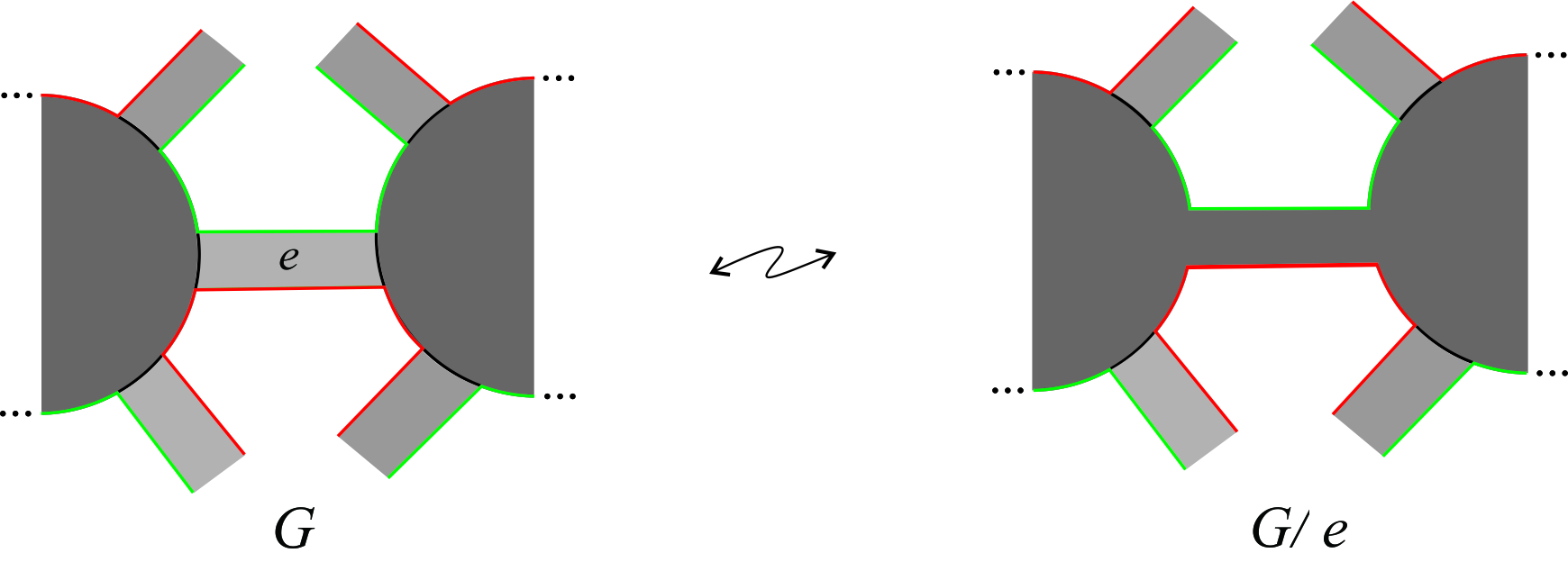}
\caption{The checkerboard coloured ribbon graphs $G$ and $G/e$.}
\label{f_7}
\end{figure}

\begin{figure}[htbp]
\centering
\includegraphics[width=5in]{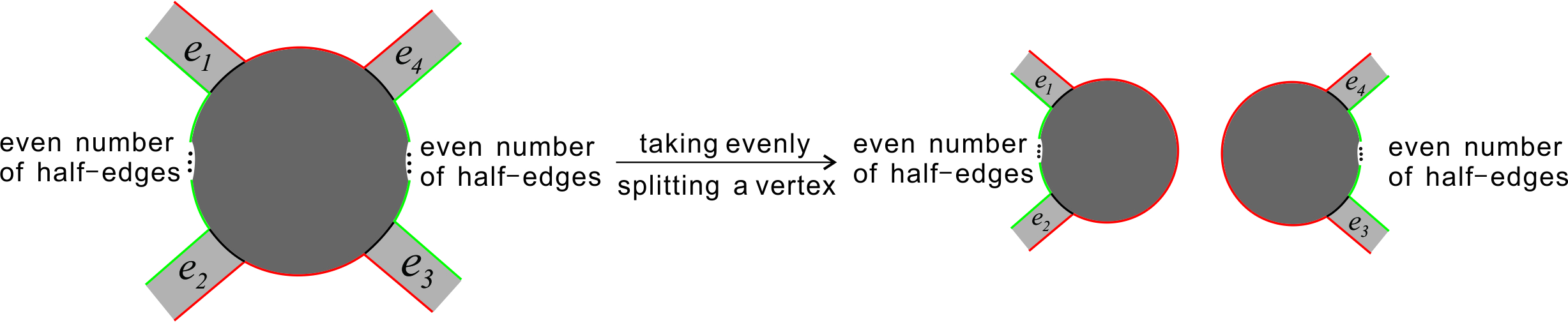}
\caption{Taking evenly splitting a vertex for a checkerboard coloured ribbon graph.}
\label{f_8}
\end{figure}
\end{proof}

By the dualtiy, we have

\begin{lemma}\label{lemma8}
The set of bipartite ribbon graphs is bipartite minor closed and even-face minor closed.
\end{lemma}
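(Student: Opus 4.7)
The plan is to prove this by direct appeal to duality, leveraging the preceding Lemmas \ref{lemma2}, \ref{lemma4}, and \ref{lemma7} rather than redoing the figure-based argument of Lemma \ref{lemma7} in the dual picture. The key observations are the standard facts already invoked in the introduction: a ribbon graph is bipartite if and only if its geometric dual is checkerboard colourable, and a ribbon graph is even-face if and only if its geometric dual is Eulerian. Combined with the involutive nature of geometric duality, $(G^{\ast})^{\ast}=G$, these give a clean translation between the two statements.

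Concretely, let $G$ be a bipartite ribbon graph, so that $G^{\ast}$ is checkerboard colourable. For the first half of the lemma, suppose $H$ is a bipartite minor of $G$. By Lemma \ref{lemma4}, $H^{\ast}$ is a checkerboard colourable minor of $G^{\ast}$. Lemma \ref{lemma7} then implies $H^{\ast}$ is checkerboard colourable, and passing back under duality yields that $H=(H^{\ast})^{\ast}$ is bipartite. For the second half, suppose instead $H$ is an even-face minor of $G$; by Lemma \ref{lemma2}, $H^{\ast}$ is an Eulerian minor of $G^{\ast}$, and again Lemma \ref{lemma7} gives that $H^{\ast}$ is checkerboard colourable, hence $H$ is bipartite.

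There is essentially no hard step: the entire content is bookkeeping around the duality dictionary (vertex $\leftrightarrow$ face, deletion $\leftrightarrow$ contraction, evenly splitting a vertex $\leftrightarrow$ evenly splitting a face), all of which has already been established. The only mild subtlety to flag, if one wanted to be careful, is to note that the operations defining bipartite and even-face minors are each the duals of those defining checkerboard colourable and Eulerian minors respectively, so that the two applications of duality compose correctly; but this is exactly what Lemmas \ref{lemma2} and \ref{lemma4} record, so no separate argument is needed.
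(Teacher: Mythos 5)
Your proof is correct and is exactly the paper's intended argument: the paper dispatches this lemma with the single phrase ``By the duality,'' and your write-up simply fills in that duality bookkeeping via Lemmas \ref{lemma2}, \ref{lemma4}, and \ref{lemma7} together with the facts that bipartiteness and the even-face property correspond under geometric duality to checkerboard colourability and the Eulerian property. No gaps; the application of Lemma \ref{lemma2} in the reversed direction is justified by $(G^{\ast})^{\ast}=G$, as you note.
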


\begin{figure}[htbp]
\centering
\includegraphics[width=4.0in]{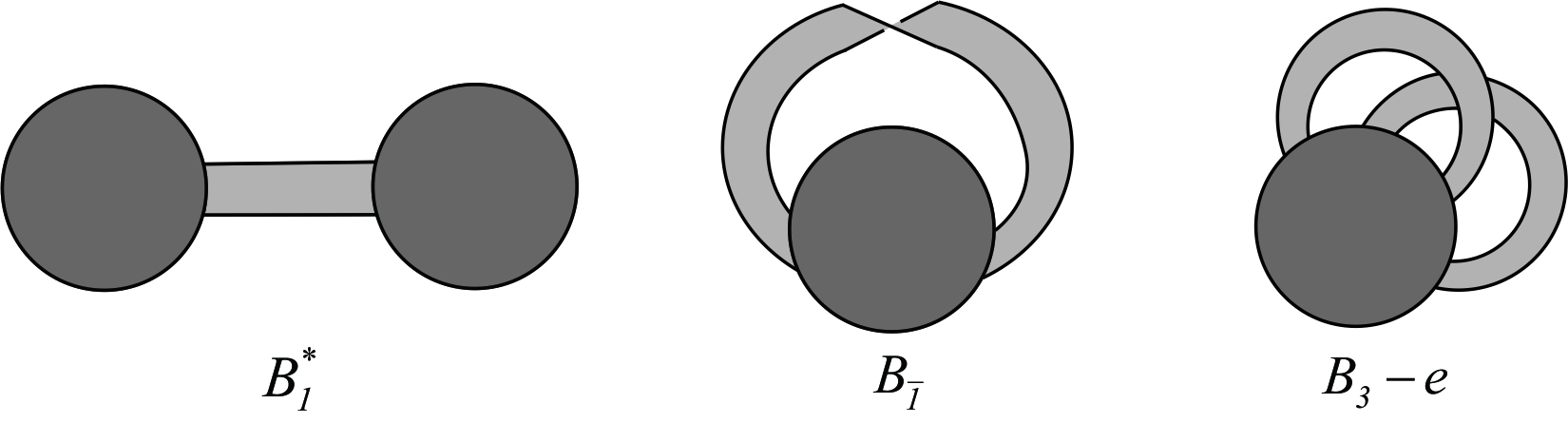}
\caption{The excluded Eulerian minors.}
\label{f 40}
\end{figure}
\begin{theorem}\label{theorem1}
A ribbon graph is checkerboard colourable if and only if it contains no Eulerian minor equivalent to $B_1^{\ast}$, $B_{\overline{1}}$ or $B_3-e$ as shown in Figure \ref{f 40}.
\end{theorem}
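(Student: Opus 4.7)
The plan is to prove both directions of the equivalence separately. The \emph{``only if''} direction is short: each of $B_1^{\ast}$, $B_{\overline{1}}$, and $B_3-e$ has exactly one boundary component, so the two edge line segments of any edge must lie on that sole boundary, and no 2-colouring of boundary components can assign the two sides of every edge different colours. In particular, none of the three is checkerboard colourable. Combining this with Lemma~\ref{lemma7}, which states that the class of checkerboard colourable ribbon graphs is Eulerian minor closed, yields immediately that any ribbon graph possessing one of $B_1^{\ast}$, $B_{\overline{1}}$, $B_3-e$ as an Eulerian minor must fail to be checkerboard colourable.

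For the \emph{``if''} direction I argue by contrapositive. Assume $G$ is not checkerboard colourable. By component deletion I may assume $G$ is connected. Since checkerboard colourability of $G$ is equivalent to bipartiteness of $G^{\ast}$ as an abstract graph, $G^{\ast}$ contains an odd cycle; let $k$ be the length of a shortest such cycle. I then case-split on $k$. When $k=1$, some edge $e$ of $G$ has both edge line segments on the same boundary component. According to whether $e$ is a non-orientable loop, an orientable loop (which must then be interlaced with another loop, since an isolated plane orientable loop has its two sides on distinct boundary components), or a non-loop edge, I reduce $G$ by a sequence of proper contractions of the remaining edges, together with even vertex splittings as needed, to $B_{\overline{1}}$, $B_3-e$, or $B_1^{\ast}$ respectively. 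When $k\geq 3$, I argue that a carefully chosen proper contraction shortens the shortest odd cycle of the dual by at least two, so iterating drops us into the $k=1$ case.

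The main obstacle is the sub-case $k=1$ with $e$ a non-loop edge. Reducing such a $G$ to $B_1^{\ast}$ requires eliminating every edge other than $e$ while keeping $e$ a non-loop edge whose two edge line segments remain on a common boundary component. Since edge deletion is not allowed among Eulerian minor operations, the contractions have to be orchestrated along a spanning forest that separates the two endpoints of $e$ into different components, with residual loops handled by further proper contractions or by evenly splitting vertices; at each step one must check properness and verify that the two sides of $e$ still lie on one boundary. A parallel technical issue arises in the $k\geq 3$ case, where one must identify a proper contraction that strictly shortens the shortest odd cycle of the dual rather than preserving or lengthening it.
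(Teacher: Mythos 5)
Your ``only if'' direction is fine and is essentially the paper's (the three graphs are not checkerboard colourable, and Lemma \ref{lemma7} gives closure under Eulerian minors). The ``if'' direction, however, has a step that fails as stated. You propose, when the shortest odd cycle of the abstract dual has length $k\geq 3$, to find ``a carefully chosen proper contraction'' that shortens it by two. But contraction and deletion are dual operations: $(G/e)^{\ast}=G^{\ast}-e^{\ast}$, so every proper contraction acts on the abstract dual graph as an edge deletion. Since every cycle of $G^{\ast}-e^{\ast}$ is already a cycle of $G^{\ast}$, deletions can only destroy odd cycles or leave the minimum odd length unchanged or larger; no sequence of contractions can ever bring $k\geq 3$ down to $k=1$. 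The only Eulerian-minor operation capable of shortening dual odd cycles is evenly splitting a vertex of $G$ (dually an even face splitting, which identifies two dual vertices), and to use it you would have to verify the even dual-distance and arrow-consistency requirements and, crucially, that non-checkerboard-colourability survives each step --- a preservation statement the paper only proves for non-loop contractions under the extra hypothesis that $G$ is Eulerian, and which is false for contractions in general (contracting the single edge of $B_1^{\ast}$ gives a checkerboard colourable graph). So the engine driving your induction on $k$ does not exist in the form you describe.

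The $k=1$ case is also not a proof yet: you yourself flag the non-loop sub-case (reduction to $B_1^{\ast}$ without edge deletions) and the properness bookkeeping as open obstacles, and these are precisely where the substance lies; moreover your claim that an orientable loop whose two sides share a boundary component must be interlaced with another \emph{loop} is too narrow --- it may instead be interlaced with half-edges of non-loop edges, so that sub-case as described does not cover all configurations. For comparison, the paper avoids measuring dual odd cycles altogether: if $G$ is not Eulerian it takes a path joining two odd-degree vertices, properly contracts all but one of its edges, and performs two even vertex splittings to reach $B_1^{\ast}$; if $G$ is Eulerian it first proves the key preservation lemma (non-loop contraction keeps $G$ non-checkerboard-colourable), contracts a spanning tree to get a bouquet, 2-colours the vertex line segments alternately to locate a violating edge $f$, and then runs an induction on the number of marking arrows between the two arrows of $f$, terminating in $B_3-e$ or $B_{\overline{1}}$. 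Your outline would need to be rebuilt around even vertex splittings (with the attendant parity and preservation arguments) to close these gaps, at which point it would largely reproduce the paper's bookkeeping.
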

\begin{proof}
Obviously, $B_1^{\ast}$, $B_{\overline{1}}$ and $B_3-e$ cannot be checkerboard coloured.
By Lemma \ref{lemma7}, a checkerboard colourable ribbon graph contains no Eulerian minor equivalent to $B_1^{\ast}$, $B_{\overline{1}}$ or $B_3-e$.

On the contrary, suppose that the ribbon graph $G$ is not checkerboard colourable.
We delete all but a component which cannot be checkerboard colourable and we also denote the resulting ribbon graph $G$.

If $G$ is not an Eulerian graph, then it has at least two vertices of odd degree.
There exists a path, denoted by $P$, with the degrees of both two endpoints of $P$ are odd and the remainders are vertices of even degree.
Suppose $P=u_1u_2\dots u_k$.
By a sequence of proper edge contractions with respect to $E(P)-u_1u_2$, we obtain the ribbon graph $G/(E(P)-u_1u_2)$ with two adjacent  vertices of odd degree. By taking evenly splitting vertices with respect to these two adjacent vertices of odd degree, respectively, and deleting all the components but the component including these two vertices, we can get an Eulerian minor $B_1^{\ast}$ of $G$ .

If $G$ is an Eulerian graph, we claim that
if e is not a loop of $G$, then $G/e$ cannot be checkerboard colourable. Let $e=uv$.
We suppose $G/e$ is checkerboard coloured. The edge $e$ is an orientable loop in ribbon graph $G^e$.
The dual distance of common line segments of $e$ must be odd in $G^e$, otherwise,
the ribbon graph $(G^e)^e=G$ has vertices of odd degree, contradicting with $G$ is an Eulerian ribbon graph.
Thus, for ribbon graph $G/e$, the two vertex line segments which contain two common line segments of $e$ are coloured with two different colours. As shown in Figure \ref{f_7}, the ribbon graph $G$ is also checkerboard colourable, a contradiction.

We obtain a bouquet $G_1$ by a sequence of proper edge contractions with respect to the edges of a spanning tree of $G$.
It follows that the arrow presentation $G_1$ is not checkerboard colourable.
We assign two colours to the vertex line segments of the arrow presentation $G_1$ alternately.
Since converting an arrow presentation to a ribbon graph is adding a line segment from the head of one marking arrow to the tail of the other marking arrow for each pair of marking arrows,
hence the vertex line segments connected with the head of one marking arrow and that connected with the tail of the other marking arrow lie on the same boundary component.
Hence, $G_1$ is checkerboard colourable if and only if the colour of the vertex line segment connected with the head of one marking arrow  is the same as that connected with the tail of the other marking arrow for each pair of marking arrows of $G_1$.
Since $G_1$ is not checkerboard colourable, there exists one edge $f$ such that the colour of the vertex line segment connected with the head of one marking arrow of $f$ is different from that connected with the tail of the other marking arrow of $f$. Let $f'$, $f''$ be two marking arrows of $f$.

If the directions of $f'$ and $f''$ are consistent on the vertex boundary of $G_1$. Then there are odd number of marking arrows lying between $f'$ and $f''$. We proceed by induction on the total number of marking arrows lying between $f'$ and $f''$.
Let the number of marking arrows lying between $f'$ and $f''$ be $2k+1$ and $2l+1$ respectively, $k, l\geq 0$.
If $k=l=0$, we derive two Eulerian minors of $G_1$, shown in Figure \ref{f_10} (a) and (b).
Otherwise, by taking an evenly splitting vertex with respect to the marking arrows lying between $f'$ and $f''$, we obtain a ribbon graph with two vertices, denoted by $u_1$, $u_2$.
If $u_1$ and $u_2$ are adjacent, we contract a non-loop edge, then the resulting ribbon graph is a bouquet with both the number of marking arrows lying between $f'$ and $f''$ are odd and the addition of them is $2k+2l$.
If $u_1$ and $u_2$ are not adjacent, we delete the component except the component including the edge $f$. Then, for the resulting ribbon graph, both the number of marking arrows lying between $f'$ and $f''$ are odd and the addition of them is less than or equal to $2k+2l$.
Therefore, by induction, we get a bouquet with both the number of marking arrows lying between $f'$ and $f''$ are one, exhibited in Figure \ref{f_10} (a) and (b).
And, by contracting the edge $e$ in Figure \ref{f_10} (b), $B_{\overline{1}}$ is an Eulerian minor of Figure \ref{f_10} (b).

\begin{figure}[htbp]
\centering
\includegraphics[width=4.0in]{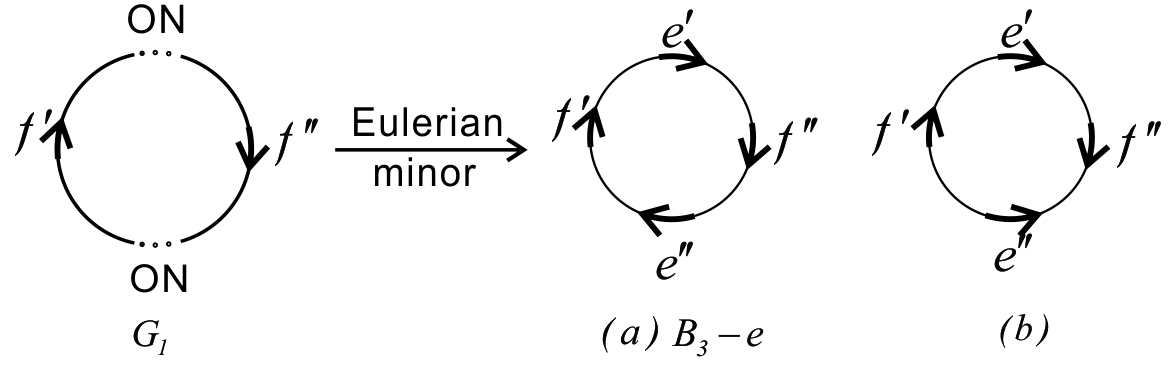}
\caption{The arrow presentation of $G_1$ when the directions of $f'$ and $f''$ are consistent and its Eulerian minors, in which ``ON" denotes the number of marking arrows is odd.}
\label{f_10}
\end{figure}

If the directions of $f'$ and $f''$ are inconsistent. Then there are even number of marking arrows lying between $f'$ and $f''$. By taking a sequence of evenly splitting vertices with respect to the marking arrows lying between $f'$ and $f''$ and component deletions, we have an Eulerian minor $B_{\overline{1}}$ of $G$, shown in Figure \ref{f_11}.

\begin{figure}[htbp]
\centering
\includegraphics[width=3.5in]{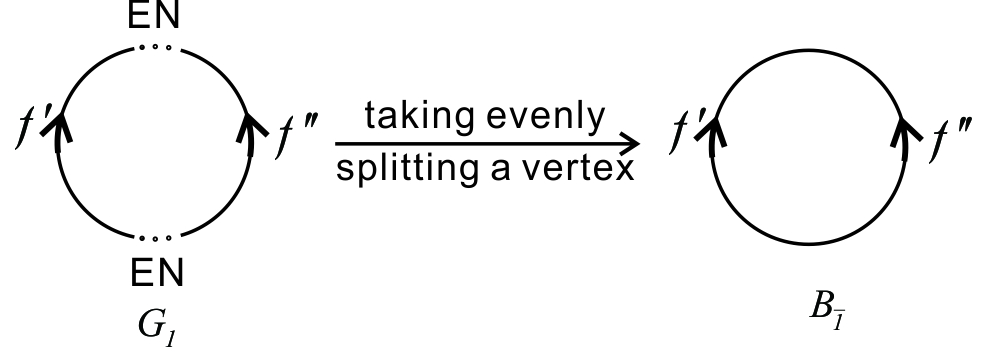}
\caption{The arrow presentation of $G_1$ when the directions of $f'$ and $f''$ are inconsistent and its Eulerian minors, in which ``EN" denotes the number of marking arrows is even.}
\label{f_11}
\end{figure}
\end{proof}

By using the checkerboard colourable minor, we obtain

\begin{theorem}\label{theorem2}
A ribbon graph is checkerboard colourable if and only if it contains no checkerboard colourable minor equivalent to $B^{\ast}_1$ or $B_{\overline{1}}$.
\end{theorem}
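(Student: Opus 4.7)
The plan is to bootstrap Theorem \ref{theorem1} using the simple observation that every Eulerian minor operation (proper edge contraction, component deletion, evenly splitting a vertex) is also a checkerboard colourable minor operation, since the ``proper'' restriction on contractions has simply been dropped. Consequently, if $H$ is an Eulerian minor of $G$, then $H$ is also a checkerboard colourable minor of $G$. This means the whole theorem reduces to a small bookkeeping step about how the third excluded Eulerian minor, $B_3-e$, behaves under the richer set of operations available here.

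For the necessity direction, Lemma \ref{lemma7} tells us that the class of checkerboard colourable ribbon graphs is checkerboard colourable minor closed. Since $B_1^{\ast}$ and $B_{\overline{1}}$ are both easily seen to have a single boundary component whose two edge line segments come from the same edge, neither is checkerboard colourable, so a checkerboard colourable ribbon graph cannot contain either as a checkerboard colourable minor.

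For the sufficiency direction, assume $G$ is not checkerboard colourable. By Theorem \ref{theorem1}, $G$ has an Eulerian minor $H$ equivalent to $B_1^{\ast}$, $B_{\overline{1}}$, or $B_3-e$, and by the observation above $H$ is also a checkerboard colourable minor of $G$. If $H$ is $B_1^{\ast}$ or $B_{\overline{1}}$, we are finished. The remaining case $H=B_3-e$ is the only genuinely new step: we must exhibit $B_1^{\ast}$ (or $B_{\overline{1}}$) as a checkerboard colourable minor of $B_3-e$ itself.

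The idea for this final step is to exploit precisely the operation that was banned in the Eulerian setting. The ribbon graph $B_3-e$ is a bouquet with two interleaved orientable loops, and for each of its two edges the dual distance of the common line segments equals $1$, so the contraction is not proper in the sense of Section~2; this is exactly why $B_3-e$ had to appear as a separate obstruction in Theorem \ref{theorem1}. Under the checkerboard colourable minor relation, however, arbitrary edge contractions are allowed. Contracting one of the two loops, via the arrow-presentation description of partial duality followed by deletion, splits the single vertex into two vertices joined by the remaining edge, which is exactly $B_1^{\ast}$. Hence $B_3-e$ has $B_1^{\ast}$ as a checkerboard colourable minor, and the reduction is complete. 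The main obstacle, and really the only nontrivial piece of work, is verifying this last contraction carefully on the arrow presentation; I expect that a short direct computation (or a labelled figure analogous to the ones in the proof of Theorem \ref{theorem1}) will suffice.
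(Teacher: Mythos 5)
Your proposal is correct and follows essentially the same route as the paper: reduce to Theorem \ref{theorem1} (the paper notes its proof only contracts non-loop edges, while you use the equivalent observation that every Eulerian minor operation is a checkerboard colourable minor operation), and then eliminate the obstruction $B_3-e$ by contracting one of its two interleaved orientable loops, which indeed yields $B_1^{\ast}$. The contraction you describe is exactly the paper's final step, so no further work is needed.
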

\begin{proof}
In the proof of Theorem \ref{theorem1}, we only contract the non-loop edges, so we have a ribbon graph is checkerboard colourable if and only if it contains no checkerboard colourable minor equivalent to $B_1^{\ast}$, $B_{\overline{1}}$ or $B_3-e$.
But, for the ribbon graph $B_3-e$, by contracting arbitrary one edge, we can obtain a checkerboard colourable minor $B_1^{\ast}$ of $B_3-e$. Therefore, the theorem is established.
\end{proof}

By the duality, we have

\begin{theorem}\label{theorem3}
A ribbon graph is bipartite if and only if it contains no even-face minor equivalent to $B_1$, $B_{\overline{1}}$ or $B_3-e$.
\end{theorem}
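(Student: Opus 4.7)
The plan is to derive Theorem \ref{theorem3} from Theorem \ref{theorem1} by dualisation, invoking Lemma \ref{lemma2} together with the elementary fact that a ribbon graph is bipartite if and only if its geometric dual is checkerboard colourable.

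First, I would observe that $G$ is bipartite if and only if $G^{\ast}$ is checkerboard colourable. By Theorem \ref{theorem1}, the latter holds if and only if $G^{\ast}$ contains no Eulerian minor equivalent to $B_1^{\ast}$, $B_{\overline{1}}$, or $B_3-e$. Applying Lemma \ref{lemma2}, the ribbon graph $G^{\ast}$ has an Eulerian minor isomorphic to $H$ if and only if $G=(G^{\ast})^{\ast}$ has an even-face minor isomorphic to $H^{\ast}$. Thus $G$ is bipartite if and only if $G$ has no even-face minor equivalent to $(B_1^{\ast})^{\ast}$, $B_{\overline{1}}^{\ast}$, or $(B_3-e)^{\ast}$.

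Next I would identify each of these three duals with the corresponding ribbon graph in the statement. Since geometric duality is an involution, $(B_1^{\ast})^{\ast}=B_1$. The ribbon graph $B_{\overline{1}}$ is a single non-orientable loop at one vertex, whose underlying surface is a M\"{o}bius band and therefore has a single boundary component; sewing a disc into that boundary component and discarding the interior of the original vertex disc yields again a one-vertex, one-edge ribbon graph carrying a non-orientable loop, so $B_{\overline{1}}^{\ast}=B_{\overline{1}}$. Finally, a direct inspection of $B_3-e$ in Figure \ref{f 40}, tracing its boundary components and sewing in the corresponding dual vertex discs, shows that $(B_3-e)^{\ast}=B_3-e$. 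Substituting gives exactly the claimed characterisation.

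The main obstacle is likely the self-duality check $(B_3-e)^{\ast}=B_3-e$, which requires a careful analysis of the boundary-component structure of $B_3-e$ and of the resulting sewn-in discs; by contrast, the self-duality of $B_{\overline{1}}$ is immediate from its surface description, and the identity $(B_1^{\ast})^{\ast}=B_1$ is automatic from involutivity of the geometric dual.
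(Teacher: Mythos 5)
Your proposal is correct and is exactly the paper's intended argument: the paper derives Theorem \ref{theorem3} from Theorem \ref{theorem1} ``by the duality,'' i.e.\ via Lemma \ref{lemma2} and the fact that a ribbon graph is bipartite precisely when its geometric dual is checkerboard colourable. Your additional explicit checks that $B_{\overline{1}}$ and $B_3-e$ are self-dual (and $(B_1^{\ast})^{\ast}=B_1$) are accurate and simply make explicit what the paper leaves implicit.
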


\begin{theorem}\label{theorem4}
A ribbon graph is bipartite if and only if it contains no bipartite ribbon graph minor equivalent to $B_1$ or $B_{\overline{1}}$.
\end{theorem}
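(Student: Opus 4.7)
The plan is to derive Theorem \ref{theorem4} from Theorem \ref{theorem2} by invoking the duality packaged in Lemma \ref{lemma4} together with the basic fact recalled in the introduction that a ribbon graph $G$ is bipartite if and only if its geometric dual $G^{\ast}$ is checkerboard colourable. Since $B_1^{\ast\ast}=B_1$ and (as one checks directly from the arrow presentation) $B_{\overline{1}}^{\ast}=B_{\overline{1}}$, the two dual exclusions match up exactly with the exclusions appearing in Theorem \ref{theorem2}.

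I would first dispatch the easy direction. By Lemma \ref{lemma8} the class of bipartite ribbon graphs is closed under bipartite ribbon graph minors; since $B_1$ and $B_{\overline{1}}$ are bouquets containing a loop, they are not bipartite as abstract graphs, so no bipartite ribbon graph can contain either of them as a bipartite ribbon graph minor.

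For the converse, suppose that $G$ is not bipartite. Then $G^{\ast}$ is not checkerboard colourable, so by Theorem \ref{theorem2} the ribbon graph $G^{\ast}$ contains a checkerboard colourable minor $H$ equivalent to $B_1^{\ast}$ or to $B_{\overline{1}}$. Applying Lemma \ref{lemma4} with the roles of $G$ and $G^{\ast}$ interchanged, and using $G^{\ast\ast}=G$, we conclude that $H^{\ast}$ is a bipartite ribbon graph minor of $G$. Finally, $H^{\ast}$ is equivalent to either $(B_1^{\ast})^{\ast}=B_1$ or $B_{\overline{1}}^{\ast}=B_{\overline{1}}$, which finishes the converse.

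The only step requiring genuine verification, rather than direct appeal to previous results, is the self-duality $B_{\overline{1}}^{\ast}=B_{\overline{1}}$; I expect this to be the only place where one has to inspect the arrow presentation, but it is immediate once drawn, since $B_{\overline{1}}$ is a single vertex with one non-orientable loop, has a single boundary component, and has Euler genus $1$, so its dual is again a one-vertex ribbon graph with a single non-orientable loop. Everything else is a formal translation through Lemma \ref{lemma4} and the dual characterisation of bipartiteness.
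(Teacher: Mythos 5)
Your proposal is correct and matches the paper's intended argument: the paper derives Theorem \ref{theorem4} from Theorem \ref{theorem2} purely by duality (via Lemma \ref{lemma4} and the fact that $G$ is bipartite iff $G^{\ast}$ is checkerboard colourable), exactly as you do. Your explicit check of the self-duality $B_{\overline{1}}^{\ast}=B_{\overline{1}}$ and of $(B_1^{\ast})^{\ast}=B_1$ simply spells out details the paper leaves implicit.
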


To end this section, we give a characterization of bipartite ribbon graphs with excluded bipartite join minors for ribbon graphs.

\begin{theorem}\label{theorem5}
A ribbon graph is bipartite if and only if it contains no bipartite ribbon graph join minor equivalent to $B_1$ or $B_{\overline{1}}$.
\end{theorem}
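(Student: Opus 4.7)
The plan is to treat the two directions separately.

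For the only if direction, note that $B_1$ and $B_{\overline{1}}$ each consist of a single vertex with a single loop, so each contains an odd cycle of length one and is thus not bipartite as an abstract graph. The remark immediately following the definition of bipartite ribbon graph join minor already records that the class of bipartite ribbon graphs is closed under this minor relation (a permissible join identifies two vertices sharing a common neighbour, hence lying in the same part of any bipartition, and vertex/edge deletions clearly preserve bipartiteness). Consequently, no bipartite ribbon graph can have $B_1$ or $B_{\overline{1}}$ as a bipartite ribbon graph join minor.

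For the if direction I would argue the contrapositive: assuming $G$ is not bipartite, exhibit a sequence of permissible joins, vertex deletions, and edge deletions that reduces $G$ to $B_1$ or $B_{\overline{1}}$. First, delete every bipartite connected component so that $G$ may be assumed connected and non-bipartite. Then $G$ contains an odd cycle as an abstract graph; fix a shortest one, $C=v_1v_2\cdots v_{2k+1}v_1$. Delete every edge of $G$ that is not an edge of $C$, and then delete every vertex not lying on $C$; what remains is the ribbon subgraph underlying $C$, a ribbon graph cycle of length $2k+1$.

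Next I would induct on $k$. The base case $k=0$ is a single vertex with a single loop, which is $B_1$ if the loop is orientable and $B_{\overline{1}}$ otherwise. For $k\ge 1$, the vertices $v_1$ and $v_3$ share the common neighbour $v_2$, so their join is permissible; let $w$ denote the merged vertex. In the resulting ribbon graph, the former edges $v_1v_2$ and $v_2v_3$ become two parallel edges between $w$ and $v_2$, while the path $v_3v_4\cdots v_{2k+1}v_1$, whose endpoints have been identified at $w$, becomes a cycle of length $2k-1$ through $w,v_4,v_5,\ldots,v_{2k+1}$ (and in the special case $k=1$, the former edge $v_1v_3$ becomes a loop at $w$). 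Deleting the two parallel edges from $w$ to $v_2$ and then deleting $v_2$ leaves precisely an odd ribbon cycle of length $2k-1$ (or, when $k=1$, already $B_1$ or $B_{\overline{1}}$), and the inductive hypothesis finishes the argument.

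The main obstacle is verifying that the permissible join, performed in the ribbon graph sense, really produces the shorter ribbon graph cycle described above rather than some topologically richer object that would derail the induction. Because the paper's definition explicitly allows the precise way of joining to be chosen and notes that this choice does not influence the characterization of bipartite ribbon graphs, the verification reduces to tracking the edge incidences at $w$ after the join and the subsequent deletions, which is routine.
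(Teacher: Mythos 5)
Your proposal is correct and follows essentially the same route as the paper: exclude $B_1$ and $B_{\overline{1}}$ by closure of bipartiteness under join minors, and in the converse pass to an odd cycle and reduce it by permissible joins and deletions to a single loop. The only difference is that you make explicit, via induction on the cycle length, the reduction the paper merely asserts, and your observation that the topological details of the join are irrelevant (since both possible terminal loops $B_1$ and $B_{\overline{1}}$ are excluded) is exactly the point the paper's remark about the non-uniqueness of joins is meant to cover.
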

\begin{proof}
It is clear that $B_{\overline{1}}$ and $B_1$ do not have a bipartition, so they cannot be bipartite ribbon graph join minors of bipartite ribbon graphs.
Conversely, if a ribbon graph $G$ is not bipartite, then there exists an odd cycle $C$ of $G$. We delete all edges but the edges of $C$. By taking a sequence permissible joins and deletions of vertex or edge, we can obtain a bipartite ribbon graph join minor $B_{\overline{1}}$ or $B_1$ of $G$.
\end{proof}

\section{Characterization of plane checkerboard colourable and plane bipartite ribbon graphs }
\noindent

\begin{lemma}\cite{M16}\label{lemma88}
If $H$ can be obtained from a ribbon graph $G$ by a sequence of edge contractions, vertex deletions and edge deletions, then $\gamma(H) \leq \gamma(G)$.
\end{lemma}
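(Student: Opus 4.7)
The plan is to proceed by induction on the length of the operation sequence, so it suffices to show that a single application of any one of the three operations (edge deletion, vertex deletion, edge contraction) does not increase $\gamma$. For each operation I would work from the defining identity
\[
\gamma(G) = 2c(G) - |V(G)| + |E(G)| - |F(G)|
\]
and track how the four parameters $(c, |V|, |E|, |F|)$ move.

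First, for a single edge deletion $G \to G-e$, the values $|V|$ and $|E|$ change by $0$ and $-1$; the changes in $c$ and $|F|$ depend on the local type of $e$. If $e$ is a non-loop bridge, then $c$ and $|F|$ each increase by $1$, so $\Delta \gamma = 0$. If $e$ is a non-loop non-bridge edge, then $c$ is fixed and $|F|$ changes by $\pm 1$ depending on whether the two edge line segments of $e$ lie on a single boundary component (deletion splits it) or on two distinct ones (deletion merges them), giving $\Delta \gamma \in \{-2,0\}$. If $e$ is a loop, then $c$ is again fixed and an analogous case split on orientability together with the position of the two edge line segments produces $\Delta \gamma \in \{-2,-1,0\}$. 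In every subcase $\Delta \gamma \leq 0$.

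Next, for a single vertex deletion $G \to G-v$, I would realise it as deleting every edge incident to $v$ first (which does not increase $\gamma$ by the previous step) and then removing the resulting isolated vertex. Removing an isolated vertex changes $(c, |V|, |E|, |F|)$ by $(-1,-1,0,-1)$, so $\Delta \gamma = -2 + 1 + 0 + 1 = 0$. Finally, for a single edge contraction $G \to G/e$, I would appeal to the definition $G/e = G^e - e$ given in Section~2 together with the observation that partial duality is a combinatorial relabelling on the same underlying topological surface: $G$ and $G^A$ are supported on homeomorphic ribbon surfaces, so $c(G^A) = c(G)$ and $|V(G^A)| - |E(G^A)| + |F(G^A)| = |V(G)| - |E(G)| + |F(G)|$, whence $\gamma(G^A) = \gamma(G)$ for every $A \subseteq E(G)$. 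Combining this with the edge-deletion inequality gives $\gamma(G/e) = \gamma(G^e - e) \leq \gamma(G^e) = \gamma(G)$.

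The main obstacle is the edge-deletion case analysis, in particular the loop subcases: verifying how $|F|$ moves when deleting a non-orientable loop whose two edge line segments sit on a single boundary component requires a careful local inspection, since $|F|$ can change by $-1$, $0$, or $+1$ according to the configuration. Once this bookkeeping is in place the remaining steps reduce to elementary substitutions into the genus formula.
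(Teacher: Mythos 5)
Your deletion and vertex-deletion steps are essentially sound, up to one misstatement in the case analysis: for a non-loop, non-bridge edge whose two edge line segments lie on a \emph{single} boundary component, deletion does not always split that component. For example, take two vertices joined by two parallel edges, one untwisted and one with a half-twist; this ribbon graph has exactly one boundary component and $\gamma=1$, and deleting the twisted edge leaves $|F|$ unchanged, so $\Delta\gamma=-1$ rather than your claimed $\Delta\gamma\in\{-2,0\}$. The conclusion $\Delta\gamma\le 0$ survives (the possibilities are $\Delta F\in\{-1,0,+1\}$ exactly as in your loop case), but the trichotomy as you state it is not exhaustive, so this part needs the same careful boundary-walk bookkeeping you already flag for loops.

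The genuine gap is the contraction step. Partial duality is \emph{not} a relabelling supported on a homeomorphic ribbon surface, and it does not preserve Euler genus: the plane ribbon graph consisting of two vertices joined by two parallel untwisted edges has $\gamma=0$, while its partial dual with respect to a single edge is the bouquet of two interlaced orientable loops, which has $\gamma=2$. (That partial duals can change genus is precisely why the characterization of low-genus partial duals in \cite{Mo16} is a nontrivial result.) Consequently the chain $\gamma(G/e)=\gamma(G^{e}-e)\le\gamma(G^{e})=\gamma(G)$ fails at the last equality, and indeed $\gamma(G^{e})$ can strictly exceed $\gamma(G)$, so no inequality in that direction is available either. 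The standard repair is to use the \emph{full} geometric dual, which does preserve $\gamma$ (it fixes $c$ and $|E|$ and swaps $|V|$ with $|F|$), together with the identity $G/e=(G^{\ast}-e^{\ast})^{\ast}$: then $\gamma(G/e)=\gamma(G^{\ast}-e^{\ast})\le\gamma(G^{\ast})=\gamma(G)$ follows from your deletion case. Alternatively, treat contraction directly: contracting a non-loop edge does not change the underlying surface at all ($\Delta\gamma=0$), and the loop cases can be settled by the same boundary-component analysis as for deletion. Note also that the paper itself only cites this lemma from \cite{M16} and gives no proof, so there is no in-paper argument to compare your approach against.
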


\begin{lemma}\cite{MJ20}\label{lemma89}
If $H$ is an Eulerian minor of a ribbon graph $G$, then $\gamma(H) \leq \gamma(G)$.
\end{lemma}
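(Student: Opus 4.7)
My plan is to proceed by induction on the number of Eulerian minor operations used to pass from $G$ to $H$. The base case $H=G$ is trivial, so in the inductive step it suffices to verify that each of the three basic operations does not increase the Euler genus. Proper edge contraction is a special case of edge contraction, and component deletion is a sequence of vertex and edge deletions, so both are immediately handled by Lemma~\ref{lemma88}.

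The substantive case is evenly splitting a vertex. By definition, this operation adds an edge $e$ at two vertex line segments $s_1, s_2$ of a common vertex $v$ (with marking arrows consistent along the vertex boundary, so that $e$ is an orientable loop of $G'=G+e$, and with even dual distance between the chosen segments), and then contracts $e$. Because $e$ is an orientable loop, the net combinatorial effect is $\Delta |V|=+1$ and $\Delta |E|=0$, and topologically the whole operation modifies the surface by cutting the vertex disk $v$ along a properly embedded arc $\alpha$ whose endpoints $s_1, s_2$ lie on the ribbon-graph boundary (vertex line segments are boundary pieces). Combining the two formulas $\chi=|V|-|E|$ and $\chi=2c-\gamma-|F|$, one obtains
\[
\gamma(H)-\gamma(G)=2\Delta c-1-\Delta F.
\]

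Two sub-cases now finish the argument. If $s_1, s_2$ lie on the same boundary component of $G$, then the arc cut splits that component in two, giving $\Delta F=+1$ and $\Delta c\in\{0,1\}$, so $\gamma(H)-\gamma(G)=2\Delta c-2\leq 0$. If $s_1, s_2$ lie on different boundary components, then the arc cut merges the two components, giving $\Delta F=-1$; the standard topological fact that a properly embedded arc with endpoints on two distinct boundary components of a connected compact surface is non-separating forces $\Delta c=0$, so $\gamma(H)-\gamma(G)=0$. The main obstacle, as I see it, is justifying that the add-then-contract procedure really corresponds to the arc-cut surgery just described---this uses the orientability of $e$ (consequence of the consistent-arrow condition) so that the contraction performs a clean arc cut rather than a crosscap surgery---together with invoking the non-separation fact from surface topology. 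Once these two ingredients are in hand, the inductive step is complete and the lemma follows.
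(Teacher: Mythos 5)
The paper itself gives no proof of this lemma---it is quoted from \cite{MJ20}---so there is no in-paper argument to compare against; judged on its own terms, your proof is correct. The reduction to the three generating operations is right: proper edge contractions and component deletions are immediate from Lemma~\ref{lemma88}, and the only substantive case is evenly splitting a vertex. Your bookkeeping $\gamma(H)-\gamma(G)=2\Delta c-\Delta|V|+\Delta|E|-\Delta|F|=2\Delta c-1-\Delta F$ is exactly what the paper's genus formula gives, and the two boundary cases (endpoints on the same face: $\Delta F=+1$, $\Delta c\in\{0,1\}$; endpoints on different faces: $\Delta F=-1$, $\Delta c=0$ because such an arc cut merges the two boundary circles and hence cannot disconnect) do finish the estimate. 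The step you flag as the main obstacle is the right one to worry about, but it closes easily from the definition of contraction: since the two marking arrows are consistent on the vertex boundary, the added loop $e$ is untwisted, so the subsurface formed by the vertex disc $v$ together with $e$ is an annulus whose two boundary circles each consist of one edge line segment of $e$ and one of the two arcs of $\partial v$ determined by the chosen vertex line segments; contracting $e$ replaces this annulus by one disc per boundary circle with the remaining half-edges attached where they were, which is precisely the result of cutting $v$ along a properly embedded arc joining the two chosen segments (with inconsistent arrows one would instead get a M\"{o}bius band and a single disc, i.e.\ the crosscap surgery you rightly exclude). With that observation made explicit the induction is complete; note also that the even dual distance hypothesis plays no role in the genus estimate---it is only needed to preserve Eulerian-ness---and your argument correctly never uses it.
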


A loop at a vertex is trivial if there is no cycle or any other loop at $v$ which alternates with the loop.

\begin{theorem}\label{theorem6}
A checkerboard colourable ribbon graph is plane if and only if it contains no checkerboard colourable minor (or Eulerian minor) equivalent to $B_3$ or $B_{\overline{3}}-e$ as shown in Figure \ref{f_13}.
\end{theorem}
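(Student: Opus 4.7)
The plan is to prove both implications separately, leaning on the genus-monotonicity facts already recorded in Section 5 for the easy direction and on a structural analysis of checkerboard colourable bouquets for the hard direction.

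Forward direction. The plan is to combine Lemma \ref{lemma88} (genus does not increase under edge contractions/deletions or vertex deletions) with Lemma \ref{lemma89} (Eulerian-minor operations do not increase genus) and the analogous observation that evenly splitting a vertex does not increase the Euler genus either; this gives $\gamma(H)\leq\gamma(G)$ for every checkerboard colourable minor and every Eulerian minor $H$ of $G$. Reading $B_3$ and $B_{\overline{3}}-e$ from Figure \ref{f_13}, we first verify directly that each is a checkerboard colourable ribbon graph of positive Euler genus, so a plane $G$ can contain neither as a minor of either flavour.

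Reverse direction. Assume $G$ is checkerboard colourable but not plane. Some connected component $G_0$ has $\gamma(G_0)\geq 1$; deleting the remaining components (a valid operation in both minor settings) reduces the problem to $G_0$. Then contract the edges of a spanning tree of $G_0$ one by one. Each such contraction is of a non-loop edge, hence proper, preserves the Euler genus, and by Lemma \ref{lemma7} preserves checkerboard colourability. The outcome is a checkerboard colourable bouquet $B$ with $\gamma(B)\geq 1$.

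The heart of the argument is then the structural claim: every checkerboard colourable bouquet of positive Euler genus contains $B_3$ or $B_{\overline{3}}-e$ as both a checkerboard colourable and an Eulerian minor. The plan is to argue by induction on $|E(B)|$ in the arrow presentation, with a fixed $2$-colouring of the vertex line segments witnessing checkerboard colourability. Using the bouquet genus identity $\gamma(B)=|E(B)|+1-|F(B)|$ and the checkerboard constraint $|F(B)|\geq 2$, the minimal examples force either all loops orientable (reducing to $B_3$) or the presence of a non-orientable loop (reducing to $B_{\overline{3}}-e$). In the inductive step, one either identifies a trivial loop, which can be removed via an evenly splitting vertex operation followed by a component deletion, or picks a non-trivial loop whose contraction is proper (its common-line-segment dual distance being odd) and whose removal leaves a smaller checkerboard colourable bouquet of positive genus; the induction hypothesis then supplies the required minor.

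The main obstacle is this structural claim. The difficulty is two-fold: first, one must show that after the reduction the ribbon graph remains checkerboard colourable and retains Euler genus at least $1$, avoiding an accidental collapse to a plane bouquet; second, one must choose the reduction step so that it is simultaneously a checkerboard colourable minor operation and an Eulerian minor operation, which forces careful case splitting between loop-contractions (available only when proper) and evenly splitting vertex operations (always available). This parallels the bookkeeping used in the bouquet analysis at the end of the proof of Theorem \ref{theorem1}, adapted to the stronger objective of producing a minor with positive genus rather than merely witnessing a violation of checkerboard colouring.
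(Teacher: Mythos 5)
Your forward direction is fine and essentially the paper's: $B_3$ and $B_{\overline{3}}-e$ have positive Euler genus, and Lemmas \ref{lemma88} and \ref{lemma89} (plus genus-monotonicity of evenly splitting a vertex) rule them out as minors of a plane graph. Likewise your initial reduction in the reverse direction (restrict to a non-planar component, contract a spanning tree, keeping checkerboard colourability and genus) coincides with the paper's first step. But the heart of the theorem --- that a non-planar checkerboard colourable bouquet necessarily yields $B_3$ or $B_{\overline{3}}-e$ as a minor of both flavours --- is exactly what you leave as a ``structural claim'' with an induction sketch that you yourself flag as incomplete, so the proposal has a genuine gap. Concretely: (i) you never exhibit a reduction step that is simultaneously a legal checkerboard colourable minor move and a legal Eulerian minor move \emph{and} provably keeps the Euler genus positive (note that edge deletion is not available in either minor, ``removal'' of a trivial loop must be done by contraction or by an evenly splitting whose dual-distance parity condition you do not verify, and an arbitrary loop contraction can collapse the genus); (ii) you never establish the base case, i.e.\ which minimal configurations actually occur and why they are equivalent to $B_3$ or $B_{\overline{3}}-e$.

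The paper closes this gap not by induction on $|E|$ but by a direct structural argument: after contracting the spanning tree it contracts all \emph{trivial orientable} loops (each such contraction preserves the Euler genus, since it adds one component and one vertex and removes one edge), deletes all but a non-planar component, and observes that checkerboard colourability forbids trivial twisted loops, so the surviving bouquet has no trivial loops at all. Hence it contains two interleaved loops whose marking arrows are adjacent (dual distance zero between a common line segment of one and of the other), and checkerboard colourability restricts the local picture around this pair to three configurations (Figure \ref{f_12}). Each configuration is then compressed by the same evenly-splitting/contraction/component-deletion bookkeeping used at the end of Theorem \ref{theorem1}, landing on the graphs of Figure \ref{f_13}, with one extra non-orientable loop contraction in case (b) to reach $B_{\overline{3}}-e$. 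If you want to salvage your induction instead, you would have to prove precisely the two missing points above; finding the adjacent interleaved pair and doing the three-case analysis is the paper's way of avoiding that genus-control difficulty altogether.
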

\begin{proof}
Since both the ribbon graphs $B_{\overline{3}}-e$ and $B_3$ are not planar, by Lemma \ref{lemma88} and Lemma \ref{lemma89}, they cannot be contained in checkerboard colourable minors (or Eulerian minors) of a plane ribbon graph.

Let $G$ be a checkerboard colourable ribbon graph.
On the contrary, we suppose $G$ is not planar. By contracting a spanning tree of $G$ we obtain an arrow presentation of a bouquet, denoted by $G_1$. When we contract a non-loop edge, we get a ribbon graph with the same number of connected components and faces, one less vertex and edge. Thus $\gamma(G_1)=\gamma(G)$.
Then we contract all trivial orientable loops of $G_1$, denoted by $G_2$.
Because by contracting a trivial orientable loop, we get a ribbon graph with one more connected component and vertex, one less edge, the same number of faces.
So $\gamma(G_2)=\gamma(G_1)$, that is $G_2$ is not planar.
We delete all components except one component that is not planar, denoted by $G_3$.
By Lemma \ref{lemma7}, the ribbon graph $G_3$ is also checkerboard colourable. Thus $G_3$ has no trivial loop (a checkerboard colourable bouquet has no trivial twisted loops).
Therefore, there exist two alternate edges with the dual distance of one common line segments of one edge and that of the other edge is zero, and denoted by $e_1$ and $e_2$.
Since $G_3$ is checkerboard colourable, as shown in Figure \ref{f_12}, the arrow presentation of $G_3$ has three cases.
Analogous to the Theorem \ref{theorem1}, by taking a sequence of evenly splitting vertices, non-loop edge contractions and component deletions, we can obtain the ribbon graphs represented in Figure \ref{f_13}.
For Figure\ref{f_13} (b), we gain a checkerboard colourable minor (or Eulerian minor) $B_{\overline{3}}-e$ by contracting a non-orientable loop.
It is in contradiction with the condition that $G$ contains no checkerboard colourable minor (or Eulerian minor) equivalent to $B_{\overline{3}}-e$ or $B_3$.
Hence $G$ is a plane ribbon graph.

\begin{figure}[htbp]
\centering
\includegraphics[width=4.0in]{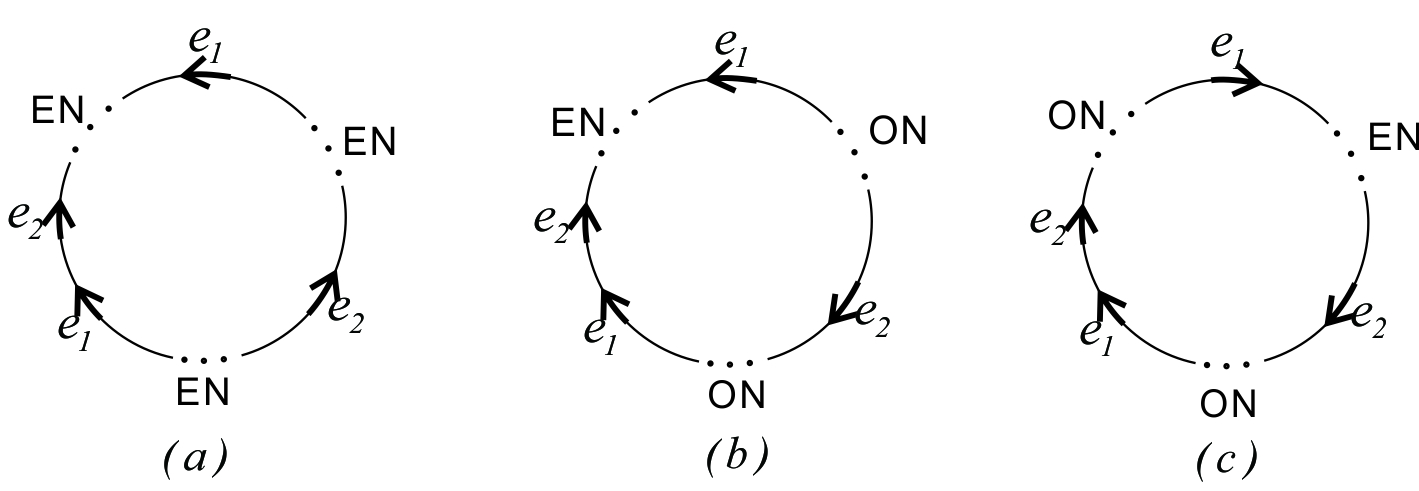}
\caption{The three possible cases of $G_3$.}
\label{f_12}
\end{figure}

\begin{figure}[htbp]
\centering
\includegraphics[width=4.0in]{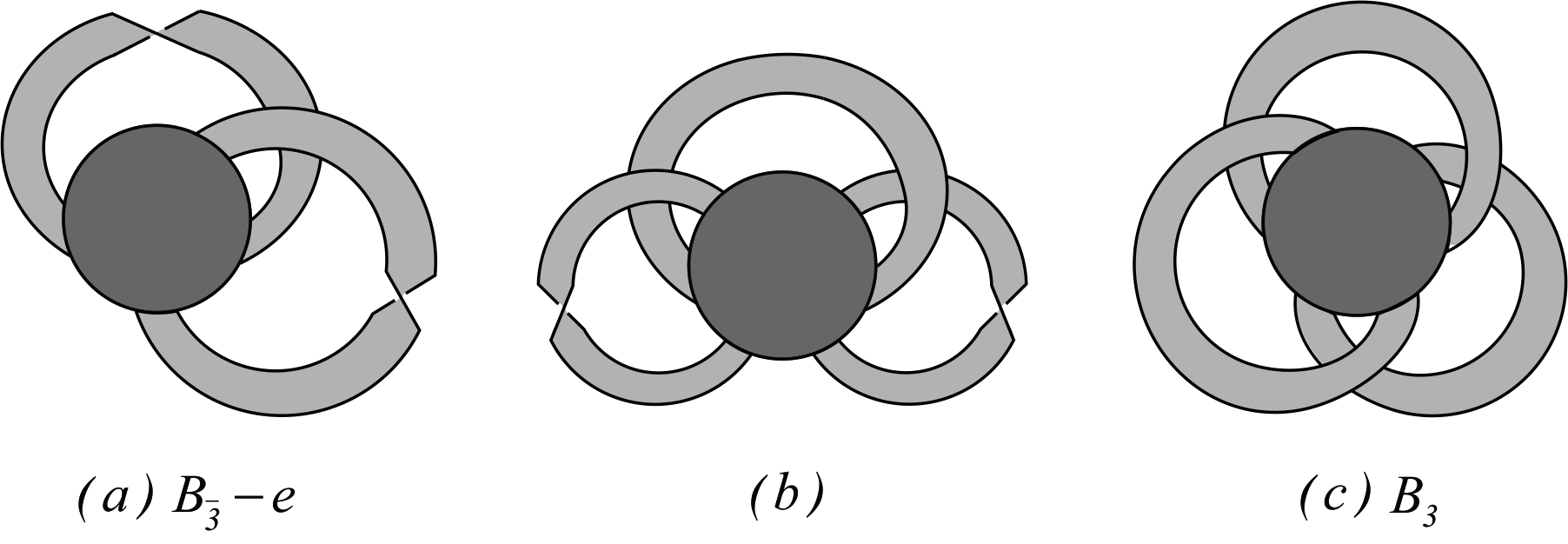}
\caption{Eulerian minors of Figure \ref{f_12}.}
\label{f_13}
\end{figure}
\end{proof}

By the duality, we have

\begin{theorem}\label{theorem7}
A bipartite ribbon graph is plane if and only if it contains no bipartite minor (or even-face minor) equivalent to $B^{\ast}_3$ or $(B_{\overline{3}}-e)^{\ast}$.
\end{theorem}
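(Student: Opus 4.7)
The plan is to obtain Theorem \ref{theorem7} as a direct dualization of Theorem \ref{theorem6}, using the duality correspondences already developed in the paper. First, I would record two preliminary observations. From the introduction we have that a ribbon graph is checkerboard colourable if and only if its geometric dual is bipartite; since geometric duality is an involution on ribbon graphs ($(G^{\ast})^{\ast}=G$), this immediately gives the symmetric statement that $G$ is bipartite if and only if $G^{\ast}$ is checkerboard colourable. Second, I would observe $\gamma(G) = \gamma(G^{\ast})$ from the Euler-genus formula in Section 2 (since $G$ and $G^{\ast}$ have the same number of connected components and edges and merely swap the roles of vertices and faces), so $G$ is plane precisely when $G^{\ast}$ is plane.

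With these in hand, given a bipartite ribbon graph $G$, I would apply Theorem \ref{theorem6} to the checkerboard colourable ribbon graph $G^{\ast}$: it is plane if and only if it contains no checkerboard colourable minor (respectively, no Eulerian minor) equivalent to $B_3$ or $B_{\overline{3}}-e$. I would then use Lemma \ref{lemma4} to transfer the first condition: a checkerboard colourable minor of $G^{\ast}$ equivalent to $H$ corresponds exactly to a bipartite minor of $(G^{\ast})^{\ast}=G$ equivalent to $H^{\ast}$. Setting $H\in\{B_3,\,B_{\overline{3}}-e\}$ yields the forbidden bipartite minors $B_3^{\ast}$ and $(B_{\overline{3}}-e)^{\ast}$. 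The parallel argument for even-face minors is carried out via Lemma \ref{lemma2} in exactly the same way.

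This is essentially a two-line translation, and I do not anticipate any substantive obstacle. The only subtlety worth a moment of attention is that the excluded minors are taken up to equivalence of ribbon graphs, so one must check that dualization respects this equivalence; this is immediate, because geometric duality is a well-defined operation on ribbon graphs. One could also note, if desired, that by an identical argument the dual of Theorem \ref{theorem7} holds, but this is not needed for the statement as given.
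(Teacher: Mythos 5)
Your proposal is correct and matches the paper's approach: the paper derives Theorem \ref{theorem7} from Theorem \ref{theorem6} precisely ``by the duality,'' i.e.\ via Lemmas \ref{lemma2} and \ref{lemma4} together with the facts that $G$ is bipartite iff $G^{\ast}$ is checkerboard colourable and that $\gamma(G)=\gamma(G^{\ast})$. Your write-up simply spells out these translation steps in more detail than the paper does.
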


We have the following corollary.

\begin{corollary}
\begin{enumerate}
\item A ribbon graph is checkerboard colourable and plane if and only if it contains no Eulerian minor equivalent to $B^{\ast}_1$, $B_{\overline{1}}$,  $B_3$, $B_3-e$ or $B_{\overline{3}}-e$.
\item A ribbon graph is checkerboard colourable and plane if and only if it contains no checkerboard colourable minor equivalent to $B^{\ast}_1$, $B_{\overline{1}}$, $B_3$ or $B_{\overline{3}}-e$.
\item A ribbon graph is bipartite and plane if and only if it contains no even-face minor equivalent to $B_1$,$B_{\overline{1}}$, $B^{\ast}_3$, $B_3-e$ or $(B_{\overline{3}}-e)^{\ast}$.
\item A ribbon graph is bipartite and plane if and only if it contains no bipartite ribbon graph minor equivalent to $B_1$, $B_{\overline{1}}$, $B^{\ast}_3$ or $(B_{\overline{3}}-e)^{\ast}$.
\end{enumerate}
\end{corollary}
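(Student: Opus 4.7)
The plan is to derive each of the four statements by combining two previously proved characterizations: one that isolates the checkerboard colourable (or bipartite) property and another that upgrades the minor-free condition to planarity within that class. For part (1), I would simply note that $G$ is checkerboard colourable and plane if and only if $G$ is checkerboard colourable and, moreover, being checkerboard colourable, $G$ is plane. Applying Theorem \ref{theorem1} to the first condition and Theorem \ref{theorem6} to the second yields the excluded Eulerian minor list $\{B_1^{\ast},\,B_{\overline{1}},\,B_3-e,\,B_3,\,B_{\overline{3}}-e\}$.

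Concretely, for the forward direction of part (1), if $G$ is checkerboard colourable and plane then Theorem \ref{theorem1} rules out Eulerian minors in $\{B_1^{\ast},\,B_{\overline{1}},\,B_3-e\}$, while Theorem \ref{theorem6} (whose hypothesis of checkerboard colourability is met) rules out Eulerian minors in $\{B_3,\,B_{\overline{3}}-e\}$; the union gives the required exclusion. For the converse, from the absence of any Eulerian minor in the listed set, one first extracts the sublist $\{B_1^{\ast},\,B_{\overline{1}},\,B_3-e\}$ and invokes Theorem \ref{theorem1} to conclude that $G$ is checkerboard colourable; one then uses the remaining sublist $\{B_3,\,B_{\overline{3}}-e\}$ together with Theorem \ref{theorem6} to conclude that $G$ is plane.

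Parts (2), (3), (4) follow by the same template with the appropriate substitutions: part (2) combines Theorem \ref{theorem2} with the checkerboard colourable minor version of Theorem \ref{theorem6}; part (3) combines Theorem \ref{theorem3} with the even-face minor version of Theorem \ref{theorem7}; and part (4) combines Theorem \ref{theorem4} with the bipartite ribbon graph minor version of Theorem \ref{theorem7}. There is no serious obstacle here since the corollary is a repackaging of already-proved facts, and the only point requiring care is to ensure that both theorems being combined in each part are applied in the same minor system (Eulerian, checkerboard colourable, even-face, or bipartite ribbon graph minor), so that the two exclusion lists can be unioned without issue; this is precisely why Theorems \ref{theorem6} and \ref{theorem7} were stated with parenthetical alternatives between the two matching minor notions.
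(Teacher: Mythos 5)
Your proposal is correct and matches the paper's (implicit) argument: the corollary is stated as an immediate consequence of combining Theorem \ref{theorem1} (resp.\ \ref{theorem2}, \ref{theorem3}, \ref{theorem4}) with Theorem \ref{theorem6} (resp.\ its checkerboard colourable minor version, and Theorem \ref{theorem7} in both minor systems), exactly as you do, with the converse handled by first extracting checkerboard colourability (or bipartiteness) from the first sublist and then planarity from the second.
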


\end{document}